\theoremstyle{plain}
\newtheorem{thm}{Theorem}[section]
\newtheorem{cor}[thm]{Corollary}
\theoremstyle{definition}
\newcommand{\Z}{\mathbb{Z}}
\newcommand{\N}{\mathbb{N}}
\DeclareMathOperator{\GL}{GL}
\DeclareMathOperator{\Ker}{Ker}
\DeclareMathOperator{\Ima}{Im}
\DeclareMathOperator{\Def}{Def}
\DeclareMathOperator{\Ind}{Ind}
\begin{document}
\title[Kac's Theorem for equipped graphs]{Kac's Theorem for equipped graphs and for maximal rank representations}

\author{William Crawley-Boevey}
\address{Department of Pure Mathematics, University of Leeds, Leeds LS2 9JT, UK}
\email{w.crawley-boevey@leeds.ac.uk}

\thanks{Mathematics Subject Classification (2000): Primary 16G20}
% 16G20 Representations of quivers and partially ordered sets

\begin{abstract}
We give two generalizations of Kac's Theorem on representations of quivers.
One is to representations of equipped graphs by relations,
in the sense of Gelfand and Ponomarev.
The other is to representations of quivers in which certain
of the linear maps are required to have maximal rank.
\end{abstract}
\maketitle

\section{Introduction}
Recall that Gabriel's Theorem \cite{Gab} asserts that a quiver has only
finitely many indecomposable representations up to isomorphism if and only if the underlying graph is a Dynkin diagram,
and that in this case the indecomposables are in \mbox{1-1} correspondence with positive roots for the corresponding
root system. Kac's Theorem \cite{Kac1,Kac2} extends the latter part to arbitrary quivers, saying that the
possible dimension vectors of indecomposable representations are the positive roots, with a unique
indecomposable for real roots and infinitely many for imaginary roots.

In \cite{GP}, Gelfand and Ponomarev announced a generalization of Gabriel's Theorem to representations of
what they called \emph{equipped graphs},
which may be thought of as generalizations of quivers, in which one is allowed not just arrows $\bullet\longrightarrow\bullet$,
but also edges with two heads $\bullet\leftarrow\!\rightarrow\bullet$ or two tails
$\bullet-\!\!\!-\!\!\!-\bullet$.
%$\bullet\rightarrow\!\leftarrow\bullet$.
Representations are given by a vector space at each vertex and a linear relation of a certain sort for each edge.
One motivation, perhaps, was that by taking all edges to be two-headed or two-tailed, one can study the
representation theory of a graph without needing to choose an orientation.

In this paper we generalize Kac's Theorem to equipped graphs. In order to obtain our proof, we study representations of quivers
in which the linear maps associated to certain arrows are required to have maximal rank, and also prove an
analogue of Kac's Theorem in this setting. The case of real roots has been dealt with by Wiedemann \cite{Wie}.

\section{Maximal rank representations}

Let $Q = (Q_0,Q_1,h,t)$ be a quiver and $\alpha\in\N^{Q_0}$ a dimension vector.
Recall that if $q$ is a prime power, $A_{Q,\alpha}(q)$ denotes the number of
isomorphism classes of absolutely indecomposable representations of $Q$
of dimension vector $\alpha$ over the finite field with $q$ elements.
By Kac's Theorem \cite[\S 1.15]{Kac2}, $A_{Q,\alpha}(q)$ is given by a polynomial in $\Z[q]$,
which is non-zero if and only if $\alpha$ is a positive root, and then is monic of
degree $1-q_Q(\alpha)$, where $q_Q$ is the associated quadratic form.

If $M$ is a subset of $Q_1$, we say that a representation of $Q$ is \emph{$M$-maximal} if
the arrows in $M$ are represented by linear maps of maximal rank. We denote by $A^M_{Q,\alpha}(q)$ the number of isomorphism
classes of absolutely indecomposable $M$-maximal representations of $Q$ over the field with $q$ elements.

\begin{thm}
$A^M_{Q,\alpha}(q)$ is given by a polynomial in $\Z[q]$,
which is non-zero if and only if $\alpha$ is a positive root, and then is monic of degree $1-q_Q(\alpha)$.
Moreover this polynomial does not depend on the orientation of $Q$.
\end{thm}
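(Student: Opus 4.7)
The first observation is that $M$-maximality defines an open $\GL_\alpha$-invariant subvariety of $\Rep(Q,\alpha)$, and is preserved by taking direct summands: for any decomposition $V = V' \oplus V''$ and any $a \in M$ the ranks add, and since neither summand can exceed the maximum rank permitted by its own dimensions, equality of the total with $\min(\alpha_{t(a)},\alpha_{h(a)})$ forces each summand to be individually $M$-maximal. Crucially, however, $M$-maximality is \emph{not} preserved by arbitrary direct sums, which is the new subtlety compared with Kac's original setting.

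The plan is to adapt the Hua--Kac counting machinery to this restricted situation. The count $|\Rep^M(Q,\alpha)(\mathbb{F}_q)|$ is visibly a polynomial in $q$, obtained from $|\Rep(Q,\alpha)(\mathbb{F}_q)|$ by replacing, for each $a \in M$, the factor of all linear maps by the polynomial count of maximal rank maps. To convert this into $A^M_{Q,\alpha}(q)$, I would stratify the indecomposable $M$-maximal representations by a sign $\epsilon(a) \in \{+,-\}$ at each $a \in M$ recording whether $\beta_{t(a)} \le \beta_{h(a)}$ or the reverse, since a direct sum of $M$-maximal indecomposables is again $M$-maximal precisely when the signs of its summands are compatible at every $a \in M$. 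This decomposes the generating series $\sum_\alpha |\Rep^M(Q,\alpha)(\mathbb{F}_q)| / |\GL_\alpha(\mathbb{F}_q)|\, t^\alpha$ into blocks indexed by sign functions, on each of which the standard plethystic identity between representation counts and absolutely indecomposable counts applies, letting Kac's arguments for polynomiality, positive root support, and degree $1 - q_Q(\alpha)$ transfer across.

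Orientation independence for arrows outside $M$ follows from the classical reflection functor proof, since those functors commute with the $M$-maximality condition. For an arrow $a \in M$, a direct bijection on isomorphism classes between $M$-maximal representations of $Q$ and of the quiver with $a$ reversed works as follows: assuming $\alpha_{t(a)} \le \alpha_{h(a)}$, an injective map $V_{t(a)} \hookrightarrow V_{h(a)}$ yields a surjection $V_{h(a)} \twoheadrightarrow V_{t(a)}$ by choosing a complement of its image and projecting along that complement. Different choices of complement produce isomorphic representations of the reversed quiver, so the construction descends to isomorphism classes, and it preserves indecomposability because it arises from an equivalence of categories.

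The main obstacle is the sign-stratification step: the partition is not strictly disjoint at the boundary case $\beta_{t(a)} = \beta_{h(a)}$, forcing an inclusion--exclusion, and the induced generating function identity must be shown to respect the plethystic structure cleanly enough for Kac's conclusions to transfer. One must in particular check that the leading term in $q$ survives, so the polynomial remains monic of the stated degree, and that the positive root characterisation of its support is preserved.
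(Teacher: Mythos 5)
Your structural observations in the first paragraph are correct and worth keeping: direct summands of an $M$-maximal representation are again $M$-maximal, because for a decomposition with dimension vectors $\beta+\gamma=\alpha$ one has $\min(\beta_{t(a)},\beta_{h(a)})+\min(\gamma_{t(a)},\gamma_{h(a)})\le\min(\alpha_{t(a)},\alpha_{h(a)})$, so maximality of the total rank forces equality throughout; and a direct sum of $M$-maximal representations is $M$-maximal exactly when the minima are attained compatibly at each $a\in M$. But the proof as outlined has two serious problems. First, the orientation-reversal step contains a false claim. Take $Q$ with vertices $1,2,3$, arrows $a\colon 1\to 2$ in $M$ and $b\colon 3\to 2$, and the $M$-maximal representation $X_1=K$, $X_2=K^2$, $X_3=K$, $X_a(v)=(v,0)$, $X_b(z)=(0,z)$. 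Choosing the complement $0\oplus K$ of $\Ima X_a$ yields the surjection $p_1(x,y)=x$, with $p_1X_b=0$; choosing the complement $\{(t,t):t\in K\}$ yields $p_2(x,y)=x-y$, with $p_2X_b\neq 0$. Whether the composite $X_3\to X_2\to X_1$ vanishes is an isomorphism invariant of representations of the reversed quiver, so the two outputs are not isomorphic (one decomposes with summand types $(1,1,0),(0,1,1)$, the other with types $(1,1,1),(0,1,0)$). The construction therefore does not descend to isomorphism classes and does not come from an equivalence of categories; the equality of counts is true but cannot be obtained by this pointwise recipe.

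Second, the step that would carry all the weight --- the sign-stratified plethystic identity --- is only gestured at, as you acknowledge. Even granting it, nothing in the outline produces the ``if'' direction of the support statement: why does every positive root admit an absolutely indecomposable $M$-maximal representation? In Kac's proof the non-vanishing at positive roots is the hard part (reflection functors and the fundamental region), and it is unclear how $M$-maximality would survive that reduction. The paper sidesteps all of this with a short induction: choose $a\in M$ with $\min(\alpha_{t(a)},\alpha_{h(a)})=m$ maximal, replace $a$ by a new vertex $v$ with arrows $b\colon t(a)\to v$ and $c\colon v\to h(a)$, and factor each map through its image to get
$A^M_{Q,\alpha}(q)=A^{M\setminus\{a\}}_{Q,\alpha}(q)-\sum_{d=0}^{m-1}A^{(M\setminus\{a\})\cup\{b,c\}}_{Q',\alpha_d}(q)$.
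The computation $q_{Q'}(\alpha_d)=q_Q(\alpha)+(\alpha_{h(a)}-d)(\alpha_{t(a)}-d)>q_Q(\alpha)$ shows the subtracted terms have strictly smaller degree, so $A^M_{Q,\alpha}$ inherits polynomiality, orientation-independence and the monic leading term of degree $1-q_Q(\alpha)$ from the base case $M=\emptyset$, i.e.\ from Kac's theorem; in particular it is non-zero at every positive root for free, and the ``only if'' is immediate since $A^M_{Q,\alpha}(q)\le A_{Q,\alpha}(q)$. I would recommend replacing the stratification and reversal constructions by this reduction.
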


\begin{proof}
In case $M$ is empty, this is Kac's Theorem, so suppose $M$ is not empty.
We prove the assertion for all quivers, dimension vectors and subsets $M$ by induction,
firstly on $m = \max\{\min(\alpha_{t(a)},\alpha_{h(a)}):a\in M\}$,
and then on the size of the set $\{ a\in M:\min(\alpha_{t(a)},\alpha_{h(a)})=m\}$.

Choose $a\in M$ with $\min(\alpha_{t(a)},\alpha_{h(a)})=m$ and let $M'=M\setminus\{a\}$.
Let $Q'$ be the quiver obtained from $Q$ by adjoining a new vertex $v$ and replacing $a$
by arrows $b:t(a)\to v$ and $c:v\to h(a)$.
By factorizing a linear map as the composition of a surjection followed by an injection, one sees
that there is a 1-1 correspondence, up to isomorphism, between representations of $Q$ in which $a$ has rank $d$,
and representations of $Q'$ in which the vector space at $v$ has dimension $d$ and $b$ and $c$ have rank $d$.
Moreover this correspondence respects absolute indecomposability.
For any $0\le d < m$, let $\alpha_d$ be the dimension vector for $Q'$ obtained from $\alpha$ by setting $\alpha_d(v) = d$.
Using that in any representation, if $a$ does not have rank $m$, then it must have rank $d<m$, we obtain
\[
A^M_{Q,\alpha}(q) = A^{M'}_{Q,\alpha}(q) - \sum_{d=0}^{m-1} A^{M'\cup\{b,c\}}_{Q',\alpha_d}(q).
\]
By induction all terms on the right hand side are independent of orientation and in $\Z[q]$, and (as used in \cite{Wie}),
\begin{align*}
q_{Q'}(\alpha_d)
&= q_Q (\alpha) + \alpha_{h(a)}\alpha_{t(a)} + d^2 - d\alpha_{t(a)} - \alpha_{h(a)}d \\
&= q_Q(\alpha) + (\alpha_{h(a)} - d)(\alpha_{t(a)} - d) > q_Q(\alpha)
\end{align*}
so that only $A^{M'}_{Q,\alpha}(q)$, which is monic of degree $1-q_Q(\alpha)$, contributes to the leading term of
$A^M_{Q,\alpha}(q)$.
\end{proof}

Let $K$ be an algebraically closed field. The theorem, together with standard arguments as used in the proof of Kac's Theorem,
give the following result.

\begin{cor}
There is an indecomposable $M$-maximal representation of $Q$ of dimension vector $\alpha$ over $K$
if and only if $\alpha$ is a positive root.
If $\alpha$ is a real root, there is a unique such representation up to isomorphism,
while if $\alpha$ is an imaginary root, there are infinitely many such representations.
\end{cor}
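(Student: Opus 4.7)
The plan is to deduce the corollary from Theorem 2.1 by the standard Lefschetz-principle passage from finite fields to $K$ that underlies Kac's original proof of his theorem; the only new ingredient is that $M$-maximality is a Zariski-open condition.

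First I would set up the geometry. The affine representation variety $R(Q,\alpha)$, defined over $\Z$, carries the $\GL_\alpha = \prod_{i\in Q_0}\GL_{\alpha_i}$-action, and the $M$-maximality condition cuts out a $\GL_\alpha$-stable open subvariety $R^M(Q,\alpha)$ via non-vanishing of suitable minors. Inside this open subvariety the locus $\Ind^M(Q,\alpha)$ of indecomposable representations is a $\GL_\alpha$-stable constructible subset, and iso classes of indecomposable $M$-maximal representations over $K$ correspond to $\GL_\alpha(K)$-orbits on $\Ind^M(Q,\alpha)(K)$.

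For the first assertion, if $\alpha$ is not a positive root then Kac's Theorem already rules out any indecomposable representation of that dimension. If $\alpha$ is a positive root, Theorem 2.1 gives $A^M_{Q,\alpha}(q) > 0$ for all sufficiently large prime powers $q$, producing absolutely indecomposable $M$-maximal representations over $\mathbb{F}_q$ and hence indecomposable $M$-maximal representations over $\bar{\mathbb{F}_p}$ for infinitely many $p$. Non-emptiness of the constructible set $\Ind^M(Q,\alpha)$, viewed as a scheme over $\Z$, is a first-order property, so the usual spreading-out-and-Lefschetz argument transfers it to every algebraically closed $K$.

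For the refinement: when $\alpha$ is a real root, $q_Q(\alpha) = 1$ forces $A^M_{Q,\alpha}(q) \equiv 1$, so over each $\bar{\mathbb{F}_p}$ the set $\Ind^M(Q,\alpha)(\bar{\mathbb{F}_p})$ consists of a single $\GL_\alpha$-orbit (using Lang's theorem to reconcile absolute indecomposability over $\mathbb{F}_q$ with orbits over $\bar{\mathbb{F}_q}$, given that the generic stabilizer is the connected group $\mathbb{G}_m$); being a single orbit is again a first-order condition, transferring to yield uniqueness over $K$. When $\alpha$ is imaginary, $1 - q_Q(\alpha) \ge 1$ so $A^M_{Q,\alpha}$ has positive degree, and the standard counting--geometry correspondence in Kac's proof shows that the categorical quotient $\Ind^M(Q,\alpha) \dbslash \GL_\alpha$ has positive dimension $1 - q_Q(\alpha)$ over any algebraically closed field, yielding infinitely many iso classes. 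The one step requiring care is the finite-field-to-$K$ transfer, but this is exactly Kac's original passage applied to the open subscheme $R^M(Q,\alpha)$ in place of $R(Q,\alpha)$, so no new technical difficulty arises.
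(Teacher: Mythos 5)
Your proposal is correct and is exactly what the paper intends: the paper proves the corollary by citing ``standard arguments as used in the proof of Kac's Theorem,'' and your write-up simply unpacks those arguments (openness of the $M$-maximal locus, polynomial counting over finite fields, and the usual transfer to an arbitrary algebraically closed field, with the real/imaginary dichotomy read off from the degree $1-q_Q(\alpha)$). No substantive difference in approach.
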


\section{Equipped graphs}

As we will allow our equipped graphs to have loops, our definitions differ slightly from those of Gelfand and Ponomarev \cite{GP}.

Recall that a graph may be thought of as a pair $(G,*)$ consisting of a quiver $G=(G_0,G_1,h,t)$, together with
a fixed-point free involution $*$ of the set of arrows $G_1$ which exchanges heads and tails,
so $h(a)=t(a^*)$ for all $a\in G_1$.
By an \emph{equipped graph} we mean a collection $((G,*),\phi)$ consisting of a graph $(G,*)$ and a mapping $\phi:G_1\to\{0,1\}$.

One may depict an equipped graph by drawing a vertex for each element of $G_0$, an arrow $a:t(a)\longrightarrow h(a)$
for each $a\in G_1$ with $\phi(a)=1$ and $\phi(a^*)=0$, a double-headed edge $a:t(a)\leftarrow\!\rightarrow h(a)$
for each $*$-orbit $\{a,a^*\}$ with
$\phi(a)=\phi(a^*)=0$, and a double-tailed edge
%$a:t(a)\rightarrow\!\leftarrow h(a)$
$a:t(a) -\!\!\!-\!\!\!- h(a)$
for each $*$-orbit $\{a,a^*\}$ with $\phi(a)=\phi(a^*)=1$.

Recall that a \emph{linear relation} between vectors spaces $V$ and $W$ is a subspace $R \subseteq V\oplus W$.
The \emph{opposite relation} is $R^{op} = \{(w,v):(v,w)\in R\} \subseteq W\oplus V$.
Considering graphs of linear maps leads one to define the following subspaces:
the kernel $\Ker R = \{v\in V:(v,0)\in R\}$,
the domain of definition $\Def R = \{v\in V:\text{$(v,w)\in R$ for some $w\in W$}\}$,
the indeterminacy $\Ind R = \{w\in W:(0,w)\in R\}$,
and the image $\Ima R = \{w\in W:\text{$(v,w)\in R$ for some $v\in V$}\}$.
There is an induced isomorphism $\Def R/\Ker R\cong \Ima R/\Ind R$, and we call the dimension of this space the \emph{rank} of $R$.
%Observe that to fix a linear relation $R \subseteq V\oplus W$ is the same thing as fixing subspaces $\Ker R \subseteq \Def R\subseteq V$
%and $\Ind R \subseteq \Ima R\subset W$ and an isomorphism $\Def R/\Ker R\to \Ima R/\Ind R$.

Following Gelfand and Ponomarev, if $r,s\in\{0,1\}$, we say that $R$ is an \emph{equipped relation of type $(r,s)$} if
in $V$, either $\Ker R=0$ if $r=0$ or $\Def R = V$ if $r=1$,
and in $W$, either $\Ind R=0$ if $s=0$ or $\Ima R=W$ if $s=1$.
As observed by Gelfand and Ponomarev, an equipped relation of type $(1,0)$ is the same as the graph of a linear map,
and one of type $(0,1)$ is the opposite of a graph of a linear map. On the other hand,
specifying an equipped relation
of type $(0,0)$ is the same as specifying a pair of subspaces $\Def R \subseteq V$ and $\Ima R\subseteq W$ and an isomorphism $\Def R\cong \Ima R$,
and specifying an equipped relation
of type $(1,1)$ is the same as specifying a pair of subspaces $\Ker R \subseteq V$ and $\Ind R\subseteq W$ and an isomorphism $V/\Ker R\cong W/\Ind R$.

By a \emph{representation} $X$ of an equipped graph $((G,*),\phi)$ we mean a collection consisting of a vector space $X_i$
for each vertex $i\in G_0$ and an equipped relation $X_a \subset X_{t(a)}\oplus X_{h(a)}$ of type $(\phi(a),\phi(a^*))$
for each arrow $a\in G_1$, such that $X_{a^*} = X_a^{op}$ for all $a$.

A \emph{morphism} of representations $\theta:X\to X'$ is given by a collection of linear maps $\theta_i:X_i\to X'_i$ for each $i\in G_0$,
such that $(\theta_{t(a)}(x),\theta_{h(a)}(y))\in X'_a$ for all $a\in G_1$ and $(x,y)\in X_a$.
%\[
%\begin{pmatrix}
%\theta_{t(a)} & 0 \\
%0 & \theta_{h(a)}
%\end{pmatrix} (X_a) \subseteq X'_a
%\]
%for all $a\in G_1$.
In this way one obtains an additive category of representations of the equipped graph $((G,*),\phi)$ over any given field.
It is easy to see that this category has split idempotents, and
we can clearly also define the notion of an absolutely indecomposable representation.
The \emph{dimension vector} of a representation $X$ is the tuple $(\dim X_i) \in\N^{G_0}$.

\begin{thm}
The number of isomorphism classes of absolutely indecomposable representations of an
equipped graph $((G,*),\phi)$
of dimension vector $\alpha\in\N^{G_0}$ over a finite field with $q$ elements is equal to $A_{Q,\alpha}(q)$, where $Q$ is any quiver with underlying
graph $(G,*)$.
\end{thm}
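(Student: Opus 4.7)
The plan is to reduce the theorem to Theorem 2.1 by constructing an auxiliary quiver $Q'$ whose $M$-maximal representations parametrise representations of the equipped graph. Fix any orientation $Q$ of $(G, *)$. I would construct $Q'$ as follows: for each $*$-orbit $\{a, a^*\}$ of type $(1,0)$ or $(0,1)$, retain the arrow of $Q$ unchanged; for each orbit of type $(0,0)$, remove $a$, introduce a new vertex $v_a$, and insert arrows $v_a \to t(a)$ and $v_a \to h(a)$; for each orbit of type $(1,1)$, dually insert $t(a) \to v_a$ and $h(a) \to v_a$. Let $M \subseteq Q'_1$ consist of all these new arrows.

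A type-$(0,0)$ equipped relation $X_a \subseteq X_{t(a)} \oplus X_{h(a)}$ has $\Ker X_a = \Ind X_a = 0$, so its two projections realise it as a vector space $U_a$ together with injective maps to $X_{t(a)}$ and $X_{h(a)}$; type $(1,1)$ is dual, using the isomorphism $X_{t(a)}/\Ker X_a \cong X_{h(a)}/\Ind X_a$. Setting $Y_{v_a} = U_a$ gives a bijection between representations $X$ of $((G,*),\phi)$ of dimension $\alpha$ and representations $Y$ of $Q'$ of dimension $\alpha'$ extending $\alpha$, with $\alpha'_{v_a} \le \min(\alpha_{t(a)}, \alpha_{h(a)})$ at each new vertex and with the arrows of $M$ out of (respectively into) $v_a$ all injective (resp.\ surjective). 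Under that size constraint, injectivity/surjectivity coincides with the $M$-maximal condition. The bijection is compatible with direct sums and base change, so it restricts to absolutely indecomposable objects, giving
\[
A_{(G,\phi),\alpha}(q) = \sum_{\alpha'} A^M_{Q', \alpha'}(q),
\]
the sum running over $\alpha'$ extending $\alpha$ with $\alpha'_{v_a} \le \min(\alpha_{t(a)}, \alpha_{h(a)})$ for each new $v_a$.

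The remaining task is to show this equals $A_{Q,\alpha}(q)$. I would do this by performing the splittings one arrow at a time. For a single arrow $a$ of a quiver $\tilde Q$, applying Theorem 2.1 with $M = \{a\}$ and $M' = \emptyset$ and using that representations of $\tilde Q$ with $a$ of maximal rank $m$ correspond to representations of the split with $\dim v = m$ and both new arrows of maximal rank, one rearranges the recursion to
\[
A_{\tilde Q, \alpha}(q) = \sum_{d=0}^{\min(\alpha_{t(a)}, \alpha_{h(a)})} A^{\{b, c\}}_{\tilde Q', \alpha_d}(q).
\]
By the orientation-independence clause of Theorem 2.1 I may freely reorient $b, c$, so I choose them to match the structure required at $v_a$. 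Iterating over all type-$(0,0)$ and type-$(1,1)$ orbits yields $A_{Q,\alpha}(q) = \sum_{\alpha'} A^M_{Q', \alpha'}(q)$ with exactly the same summation range, which combined with the previous identity gives the theorem.

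The main obstacle is verifying that the bijection transports direct-sum decompositions in both directions, so that absolute indecomposability is preserved: a decomposition of $Y$ at a new vertex $v_a$ must force a decomposition of $X_a$ as a direct sum of equipped subrelations inside $X_{t(a)} \oplus X_{h(a)}$, and conversely. This reduces to the tautology that a pair of injections (resp.\ surjections) into (resp.\ out of) a common vector space decomposes canonically with that space.
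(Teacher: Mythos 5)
Your proposal is correct and follows essentially the same route as the paper: both subdivide edges at a new vertex, translate equipped relations of each type into pairs of injective/surjective maps (the paper's table of configurations), and invoke the orientation-independence of $A^M$ from Theorem 2.1 to move between equippings. The only cosmetic difference is that the paper splits \emph{every} edge to form one auxiliary quiver $\Delta$ and notes that changing $\phi$ merely reorients $\Delta$, whereas you split only the $(0,0)$ and $(1,1)$ orbits and then un-split them via the rank-stratification recursion after reorienting.
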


\begin{proof}
We use the following observation. If $V$ and $W$ are vector spaces, and $U$ is a vector space of dimension $d$,
then equipped relations $R\subseteq V\oplus W$ of type $(r,s)$ and rank $d$
are in 1-1 correspondence with $\GL(U)$-orbits of pairs of linear maps $a,b$ of rank $d$ in the following configurations.
\[
\begin{tabular}{|c|c|l|}
\hline
Type & Configuration & Corresponding equipped relation
\\
\hline
$(0,0)$ & $V\stackrel{a}\hookleftarrow U\stackrel{b}\hookrightarrow W$ & $R = \{(a(u),b(u)):u\in U\}$
\\
$(0,1)$ & $V\stackrel{a}\hookleftarrow U\stackrel{b}\twoheadleftarrow W$ & $R = \{(a(b(w)),w):w\in W\}$
\\
$(1,0)$ & $V\stackrel{a}\twoheadrightarrow U\stackrel{b}\hookrightarrow W$ & $R = \{(v,b(a(v))):v\in V\}$
\\
$(1,1)$ & $V\stackrel{a}\twoheadrightarrow U\stackrel{b}\twoheadleftarrow W$ & $R = \{(v,w)\in V\oplus W: a(v)=b(w)\}$.
\\
\hline
\end{tabular}
\]
%
%\[
%\begin{matrix}
%\text{Type} & \text{Configuration} & \text{Corresponding equipped relation}\\
%(0,0) & V\xleftarrow{a} U\xrightarrow{b} W & R = \{(a(u),b(u)):u\in U\}\\
%(0,1) & V\xleftarrow{a} U\xleftarrow{b} W & R = \{(a(b(w)),w):w\in W\}\\
%(1,0) & V\xrightarrow{a} U\xrightarrow{b} W & R = \{(v,b(a(v))):v\in V\} \\
%(1,1) & V\xrightarrow{a} U\xleftarrow{b} W & R = \{(v,w)\in V\oplus W: a(v)=b(w)\}
%\end{matrix}
%\]
%\[
%\begin{matrix}
%\text{Type} & \text{Configuration} & \text{Corresponding equipped relation}\\
%(0,0) & V\stackrel{a}\hookleftarrow U\stackrel{b}\hookrightarrow W & R = \{(a(u),b(u)):u\in U\}\\
%(0,1) & V\stackrel{a}\hookleftarrow U\stackrel{b}\twoheadleftarrow W & R = \{(a(b(w)),w):w\in W\}\\
%(1,0) & V\stackrel{a}\twoheadrightarrow U\stackrel{b}\hookrightarrow W & R = \{(u,b(a(v))):v\in V\} \\
%(1,1) & V\stackrel{a}\twoheadrightarrow U\stackrel{b}\twoheadleftarrow W & R = \{(v,w)\in V\oplus W: a(v)=b(w)\}.
%\end{matrix}
%\]

Let $\Delta$ be the quiver whose
vertex set is the disjoint union $\Delta_0 = G_0 \cup \overline G_1$,
where $\overline G_1$ denotes the set of $*$-orbits $[a]$ in $G_1$,
and where each arrow $a\in G_1$ defines an arrow $a'$ in $\Delta_1$,
with either $h(a') = t(a), t(a') = [a]$ if $\phi(a)=0$, or $h(a') = [a], t(a') = t(a)$ if $\phi(a)=1$.

The observation above leads to an
equivalence between the category of representations of $((G,*),\phi)$
and the category of representations of $\Delta$ in which arrows
with head in $\overline G_1$ are represented by surjective linear maps
and arrows with tail in $\overline G_1$ are represented by injective linear maps.

It follows that the number of isomorphism classes of absolutely indecomposable representations of
$((G,*),\phi)$ of dimension vector $\alpha\in\N^{G_0}$ over a finite field with $q$ elements is
\[
n_{((G,*),\phi),\alpha}(q) = \sum_{\alpha'} A^{\Delta_1}_{\Delta,\alpha'}(q),
\]
where $\alpha'$ runs over all dimension vectors for $\Delta$ whose restriction to $G_0$ is equal to $\alpha$
and with $\alpha'_{[a]} \le \min(\alpha_{t(a)},\alpha_{h(a)})$ for all $a\in G_1$.

Changing the equipping function $\phi$, changes the orientation of $\Delta$,
but by the results in the previous section, this does not change the sum above,
so $n_{((G,*),\phi),\alpha}(q)$ is independent of $\phi$.

In particular, identifying $G_1$ with the disjoint union $Q_1 \cup Q_1^*$,
and defining $\phi'$ by $\phi'(a) = 1\Leftrightarrow a\in Q_1$,
representations of $(G,*,\phi')$ are essentially the same as representations of $Q$, so
$n_{((G,*),\phi),\alpha}(q) = n_{((G,*),\phi'),\alpha}(q) = A_{Q,\alpha}(q)$.
\end{proof}

If $K$ is an algebraically closed field, we deduce the following.

\begin{cor}
There is an indecomposable representation of an equipped graph $((G,*),\phi)$ of dimension
vector $\alpha\in \N^{G_0}$ over $K$ if and only if $\alpha$ is a positive root for the underlying graph $(G,*)$.
If $\alpha$ is a real root, there is a unique such representation up to isomorphism,
while if $\alpha$ is an imaginary root, there are infinitely many such representations.
\end{cor}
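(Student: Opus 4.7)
The plan is to deduce the corollary from Theorem 3.1 by combining the category equivalence exhibited in its proof with the analogous Corollary 2.3 applied to the quiver $\Delta$ and $M = \Delta_1$. Over any field, and in particular over $K$, the equivalence sends an indecomposable representation of $((G,*),\phi)$ of dimension vector $\alpha$ to an indecomposable $\Delta_1$-maximal representation of $\Delta$ whose dimension vector $\alpha'$ satisfies $\alpha'|_{G_0} = \alpha$ and $\alpha'_{[a]} \le \min(\alpha_{t(a)},\alpha_{h(a)})$; conversely, any such $\Delta_1$-maximal representation arises this way. Thus the question reduces to counting, as $\alpha'$ varies over the admissible extensions of $\alpha$, indecomposable $\Delta_1$-maximal representations of $\Delta$ of dimension vector $\alpha'$.

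By Corollary 2.3, such an indecomposable exists over $K$ if and only if $\alpha'$ is a positive root for $\Delta$, is unique up to isomorphism when $\alpha'$ is a real root, and comes in infinitely many isomorphism classes when $\alpha'$ is imaginary. To translate this back to a statement about $\alpha$, I would invoke the identity
\[
A_{Q,\alpha}(q) = \sum_{\alpha'} A^{\Delta_1}_{\Delta,\alpha'}(q)
\]
from the proof of Theorem 3.1, where $Q$ is any orientation of $(G,*)$. By Kac's Theorem, the left-hand side is nonzero if and only if $\alpha$ is a positive root of $(G,*)$, and is then monic of degree $1-q_Q(\alpha)$. By Theorem 2.1, each nonzero summand on the right is monic of degree $1-q_\Delta(\alpha')\ge 0$ with non-negative integer values at prime powers. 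Hence $\alpha$ is a positive root if and only if some admissible $\alpha'$ is a positive root for $\Delta$, which settles existence.

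For the dichotomy between real and imaginary roots: if $\alpha$ is real, then $A_{Q,\alpha}(q) = 1$ is a positive constant polynomial, which forces every contributing summand to be constant (so the corresponding $\alpha'$ is a real root of $\Delta$) and exactly one summand to equal $1$; Corollary 2.3 then yields a unique indecomposable. If $\alpha$ is imaginary, $A_{Q,\alpha}(q)$ has positive degree, so some summand has positive degree, i.e.\ some admissible $\alpha'$ is an imaginary root of $\Delta$; Corollary 2.3 produces infinitely many indecomposables of dimension vector $\alpha'$ for $\Delta$, and the equivalence transfers these to infinitely many for the equipped graph. The main obstacle I anticipate is the uniqueness step for real roots, where one must really exploit monic-ness and non-negativity of the polynomial summands to pin down a single contributing $\alpha'$, rather than only concluding that the constant terms of the summands sum to $1$.
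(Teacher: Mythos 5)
Your proposal is correct, and it supplies in full the deduction that the paper leaves implicit (the paper merely says ``we deduce the following'' after Theorem 3.3, relying on the equivalence with $\Delta_1$-maximal representations of $\Delta$, the identity $A_{Q,\alpha}(q)=\sum_{\alpha'}A^{\Delta_1}_{\Delta,\alpha'}(q)$, and Corollary 2.2 --- exactly the ingredients you use). The ``obstacle'' you flag in the real root case is not actually a gap: since every nonzero summand is monic, the top-degree coefficients are all $+1$ and cannot cancel, so $A_{Q,\alpha}(q)=1$ forces every contributing $\alpha'$ to give the constant polynomial $1$ and hence forces exactly one contributing $\alpha'$, which is then a real root of $\Delta$; the same no-cancellation observation handles existence and the imaginary case, just as you describe.
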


\frenchspacing


\begin{thebibliography}{99}

\bibitem{Gab}
P. Gabriel,
Unzerlegbare Darstellungen, I,
\textit{Manuscripta Math.} \textbf{6} (1972), 71--103.

\bibitem{GP}
I. M. Gelfand and V. A. Ponomarev,
Gabriel's theorem is also valid for representations of equipped graphs by relations,
\textit{Funktsional. Anal. i Prilozhen.} \textbf{15} (1981), 71–-72.
English translation: \textit{Functional Anal. Appl.} \textbf{15} (1981), 132--133.

\bibitem{Kac1}
V. G. Kac,
Infinite root systems, representations of graphs and invariant theory,
\textit{Invent. Math.} \textbf{56} (1980), 57--92.

\bibitem{Kac2}
---,
Root systems, representations of quivers and invariant theory.
In \textit{Invariant theory} (Montecatini, 1982),
F. Gherardelli (ed.),
Lec. Notes in Math. 996, Springer, Berlin, 1983, 74--108.

\bibitem{Wie}
M. Wiedemann,
Quiver representations of maximal rank type and an application to representations of a quiver with three vertices,
\textit{Bull. Lond. Math. Soc.} \textbf{40} (2008), 479–-492.

\end{thebibliography}
\end{document}